\newtheorem{theorem}{Theorem}[section]
\newtheorem{lemma}[theorem]{Lemma}
\newtheorem{proposition}[theorem]{Proposition}
\newtheorem{corollary}[theorem]{Corollary}
\theoremstyle{definition}
\newtheorem{example}[theorem]{Example}
\numberwithin{equation}{section}
\newcommand{\Pp}{{\mathbb P}}
\newcommand{\R}{{\mathbb R}}
\newcommand{\C}{{\mathbb C}}
\newcommand{\cnd}{C_{n,d}}
\newcommand{\barcnd}{\bar{C}_{n,d}}
\newcommand{\nt}{\text{NT}}
\newcommand{\cnt}{\text{cNT}}
\newcommand{\gnd}{\gamma_{n,d}}
\newcommand{\bargnd}{\bar{\gamma}_{n,d}}
\newcommand{\cn}{C_n}
\newcommand{\Z}{{\mathbb{Z}}}
\newcommand{\codim}{\mathrm{codim}}
\let\phi\varphi
\newcommand{\varnt}{\boldsymbol F^{\nt_d}}
\author{Joachim von zur Gathen}
\address{
    B-IT\\
    Universit\"{a}t Bonn\\
    D - 53113 Bonn}
\email{gathen@bit.uni-bonn.de}
\thanks{JvzG acknowledges the support of the B-IT Foundation and the Land
Nordrhein-Westfalen}
\author{Guillermo Matera}
\address{
Instituto del Desarrollo Humano\\
Universidad Nacional de Gene\-ral Sarmiento, J.M. Guti\'errez 1150
(B1613GSX) Los Polvorines, Buenos Aires, Argentina \\
and National Council of Science and Technology (CONICET),
Ar\-gentina} \email{gmatera@ungs.edu.ar}
\thanks{GM is partially supported by the grants PIP CONICET 11220130100598,
PIO {CONICET-UNGS} 14420140100027 and UNGS 30/3084.}
\keywords{Polynomial composition, real polynomials, complex
polynomials, volume, tubes.} \subjclass{Primary 12D05, Secondary
26B15, 51M25, 68W30}
\begin{document}

\title[Decomposable polynomials over $\R$ and $\C$]{Density of real and complex decomposable univariate polynomials}

\begin{abstract}
We estimate the density of tubes around the algebraic variety of
decomposable univariate polynomials over the real and the complex
numbers.
\end{abstract}
\maketitle

For two univariate polynomials $g,h \in F[x]$ of degrees $d$, $e$,
respectively, over a field $F$, their \emph{composition}
\begin{equation}
\label{comp} f = g(h) = g \circ h \in F[x]
\end{equation}
is a polynomial of degree $n = de$. If such $g$
 and $h$ exist with degree at least $2$, then $f$ is called
 \emph{decomposable} (or \emph{composed, composite}, a \emph{composition}).

 Since the foundational work of Ritt, Fatou, and Julia in the 1920s on
 compositions over $\C$, a substantial body of work has been concerned
 with structural properties (e.g., \cite{frimac69},
 \cite{dorwha74}, \cite{sch82c, sch00c}, \cite{zan93}), with
 algorithmic questions (e.g., \cite{barzip85}, \cite{kozlan89b},
 \cite{bla13}), and with enumeration over finite fields, exact and
 approximate (e.g., \cite{gie88b}, \cite{gatgie10},
 \cite{blagat13}, \cite{gat08c}, \cite{zie14}).

This paper presents analogs for the case of the real or complex
numbers of the latter counting results. What does counting mean
here? The dimensions and degrees of its irreducible components as
algebraic varieties? These quantities turn up in our argument, but
we bound here the \emph{density} of these components. Any proper
algebraic subvariety $X$ of $\R^{n}$ has volume and density $0$.
However, we can bump up the dimension of $X$ to $n$ by taking an
$\epsilon$-tube $U_{\epsilon}$ around $X$, replacing each point in
$X$ by a hypercube $(- \epsilon, \epsilon)^{\codim X}$ for ``small''
positive $\epsilon$. If this is done properly, $U_{\epsilon}$ has
dimension $n$. Its volume may be infinite, and we make it finite by
intersecting with a hypercube $(-B, B)^{n}$ for some ``large''
positive $B$. Then the \emph{density} of $X$ in $(-B, B)^{n}$ is
this finite volume divided by the volume $(2B)^{n}$ of the large
hypercube. A similar approach works in the complex case.

Let $X$ be an equidimensional real or complex algebraic variety
embedded in a $k$-dimensional affine space with codimension $m$, and
consider the $\epsilon$-neighborhood $\{y \colon \vert x-y \vert <
\epsilon \}$ of some point $x$ in the space.  This is a real
hypercube or a complex polycylinder, respectively. We also use the
corresponding notion for a projective space. There are several
notions for forming an $\epsilon$-tube around $X$, namely, as the
union of all
\begin{itemize}
\item $k$-dimensional $\epsilon$-neighborhoods of $x \in X$,
\item $m$-dimensional $\epsilon$-neighborhoods in the direction normal
  to $x \in X$,
\item $m$-dimensional $\epsilon$-neighborhoods in a fixed direction
  around $x \in X$.
\end{itemize}
Singular points may be disregarded.  The first two tubes comprise
the points in the affine space whose distance to $X$ in a normal
direction is less than $\epsilon$.  Thus the two notions coincide,
at least for smooth varieties. The ratio to the volume in the third
notion is locally $~cos \alpha$, where $\alpha$ is the angle between
the two $m$-dimensional linear spaces, namely the normal space (at a
nonsingular point) and the space in the chosen fixed direction.

The present paper uses exclusively the third notion, the others are
included here only for perspective.

Weyl \cite{wey39}, answering a question posed by Hotelling
\cite{hot39}, proved fundamental results on tubes around manifolds.
Since then, the topic has been studied in topology and differential
geometry and is the subject of the textbook of Gray \cite{gra90a},
which includes many further references. The first notion and
generalizations of it are commonly used.

For algebraic varieties and the first notion, Demmel \cite{dem88}
and Beltr\'an \&\ Pardo \cite{belpar07} show upper bounds of the
form $c \cdot \deg X \cdot (\epsilon/B)^{2m}$ on the density of
complex $\epsilon$-tubes inside the $B$-neighborhood of $0$, where
$c$ does not depend on $\epsilon$ or $B>\epsilon$. In the real case,
$2m$ is replaced by $m$. Often it is sufficient to consider $B=1$.
Lower bounds, with various values for $c$, are also available.
\cite{sma81} uses the third notion and shows in his Theorem 4A an
upper bound of $k(\epsilon/B)^{^{2}}$ for the hypersurface of  monic
squareful univariate polynomials of degree $k$ in $\C^{k}$. These
papers investigate the condition number which is large for inputs at
which (iterative) numerical methods behave badly, such as the
matrices close to singular ones for matrix inversion or the
(univariate) polynomials close to squareful ones for Newton's root
finding method. These hypersurfaces are also the topic of
\cite{shusma93}.

Yet another notion is the $2d$-dimensional volume of a
$d$-dimensional variety in a complex affine space. Demmel
\cite{dem88}, Section 7, provides bounds on this volume.

We study the density of tubes around  the (affine closed, usually
reducible) variety of decomposable univariate polynomials. An
isomorphism with an affine space (Theorem \ref{geometry}) suggests a
preferred (constant) direction in which to attach
$\epsilon$-neighborhoods, thus following the third one of the
recipes sketched above.

Cheung, Ng \&\ Tsang \cite{cheng13} also consider decomposable
polynomials. They bound the density of a hypersurface containing
them, using the third notion. This provides, by necessity, a weaker
bound than ours.

This paper is organized as follows. Section \ref{sec:nt} presents a
decomposition algorithm which is central for our approach, and
results on the dimensions and degrees of various varieties of
decomposable polynomials. Section \ref{heightBound} discusses bounds
on the growth of coefficients in the decomposition algorithm
mentioned above. These bounds are asymptotically optimal in a
certain sense. Section \ref{section: density estimates in R} defines
our tubes over $\R$ and presents
upper and lower bounds for the resulting density (Theorem
\ref{th:least}). Section \ref{densityComplex} considers the
analogous problem over $\C$. Section \ref{discus} takes up the above
discussion of other notions of tubes and of related work, with some
more detail.

\section{The Newton-Taylor decomposition algorithm}
\label{sec:nt}

It is well known that we may assume all three polynomials in
\eqref{comp} to be monic (leading coefficient 1) and original
(constant coefficient 0, so that the graph contains the origin). All
other compositions can be obtained from this special case by
composing (on the left and on the right) with linear polynomials
(polynomials of degree 1); see, e.g., von zur Gathen \cite{gat13}.

Thus we consider for a proper divisor $d$ of $n$ and $e=n/d$
\begin{align}
\label{allPolys}
P_n(F) = & \; \{ f \in F[x] \colon \deg f = n, f \text{ monic original}\}, \nonumber\\
\gamma_{n,d} \colon & P_d(F) \times P_e(F) \rightarrow P_n(F) \text{
  with }
\gamma_{n,d} (g,h) = g \circ h, \nonumber \\
C_{n,d}(F) = & \; \{ f \in P_n(F) \colon \exists g,h \in P_d(F)
\times
P_e(F) \;\; f = g \circ h \}  \nonumber \\
= & \; \text{im } \gamma_{n,d} , \nonumber \\
C_n(F) = & \bigcup_{{d \mid n} \atop {d \not \in \{1,n\} } }
C_{n,d}(F).
\end{align}
$P_n(F)$ is an $(n-1)$-dimensional vector space over $F$ and
$C_n(F)$ is the algebraic variety of decomposable polynomials. We
drop the argument $F$ when it is clear from the context. When $n$ is
prime, then $C_n(F)$ is empty, and in the following we always assume
$n$ to be composite.

We recall the decomposition algorithm from von zur Gathen
\cite{gat90c}.  It computes $\gnd^{-1}(f)$ for $f \in {\cnd}$ by
taking the reverse $\tilde{f} = x^{n} \cdot f(x^{-1})$ of $f$ and
computing its $d$th root $\tilde{h}$ modulo $x^{e}$, via Newton
iteration with initial value $1$. Thus $\tilde h ^d \equiv \tilde f
\bmod x^e$, $\deg \tilde h < e$, and $\tilde h (0) = 1$. Then the
reverse $h = x^{e} \cdot \tilde{h}(x^{-1})$ of $\tilde{h}$ is monic
original of degree $e$ and is the unique candidate for the right
component. The Newton iteration is well-defined unless $~char(F)$
divides $d$. Like any polynomial of degree at most $n = de$, $f$ has
a (unique) generalized Taylor expansion $f = \sum_{0 \leq j \leq
  d} G_j h^j$ around $h$, with all $G_j \in F[x]$ of degree less than
$e$. Then $f \in C_{n,d}$ if and only if all $G_j$ are constants,
and if so, indeed $f = g \circ h$ with $g = \sum_{0 \leq j \leq d}
G_j x^j$. We call this the Newton-Taylor (NT) method for
decomposing. This computation expresses each coefficient of $g$ and
$h$ as a polynomial in the coefficients of $f$, as illustrated in
Example \ref{exa:ghf}. It can be executed with $O(n \log^{2}n
\log\!\log n)$ operations in $F$. For more details on the computer
algebra machinery, see Section \ref{heightBound}, the cited article,
and von zur Gathen \&\ Gerhard \cite{gatger13}, Sections 9.2 and
9.4.

When $f$ is known to be in $\cnd$ and $h$ has been calculated from
its $e$ highest coefficients, then only the coefficients of $f$ at
powers $x^i$ with $d$ dividing $i$ are needed to compute $g$. We let
$N = \{1, 2, \ldots, n-1\}$ be the support of a general $f \in P_n$.
The NT method only uses the coefficients $f_i$ of $f$ at $x^i$ for
those $i \in N$ which are in the \emph{Newton-Taylor set}
\begin{equation*}
\label{nt} \nt_d = \{ n-1, \ldots, n-e + 1 \} \cup \{ i \in N \colon
e \mid i \}.
\end{equation*}
We also take the complement $\cnt_d = N \setminus \nt_d$. Then
\begin{align}
\label{dim}
\# \nt_d  & = n/d -1 + d -1 = d+n/d-2,\nonumber \\
m_d & = \# \cnt_d = n -d -n/d +1.
\end{align}

\begin{figure}[h!]
  \centering
  \def\s#1#2{\mathop{#1}\limits_{\strut\llap{$\scriptstyle #2$}}}%
  \def\bb#1{\s\bullet{}}%
  \def\gb#1{\s\circ{#1}}%
  \def\nd#1{\s\cdot{}}%
  $$
  \begin{array}{ccccccccccccccccccccc}
    \s{1}{20} & \bb{} & \bb{} & \bb{} &
    \gb{16} & \nd{} & \nd{} & \nd{} &
    \gb{12} & \nd{} & \nd{} & \nd{} &
    \gb{\phantom{0}8} & \nd{} & \nd{} & \nd{} &
    \gb{\phantom{0}4} & \nd{} & \nd{} & \nd{} &
    0
  \end{array}
  $$
  ~
  \caption{The Newton-Taylor set for $n=20$ and $d=5$.}
  \label{fig:coeffs}
\end{figure}

\begin{example}\label{exa:ghf}
  For $n = 20$, $d = 5$, and $e = 4$, we have $NT_{5} = \{ 19, 18, 17,
  16, 12, 8, 4 \}$ and $\#NT_{5} = d + n/d -2 = 7$.  The leading and
  trailing coefficients of any $f\in P_{20}$ are fixed as $1$ and $0$,
  respectively.  The bullets and open circles in Figure \ref{fig:coeffs} are
  positions of coefficients used in the Newton and Taylor algorithms,
  respectively.  Using the binomial expansion (also known to Newton)
  instead of the Newton iteration and $u= f_{19} x + f_{18} x^{2} +
  f_{17} x^{3}$, we find
\begin{align*}
  \tilde{h} & = 1 + h_{3} x + h_{2} x^{2} + h_{1}x^{3} \equiv (1 +
  u)^{1/5}\\
  & = \sum_{\ell \geq 0}\binom{1/5}{\ell} u^{\ell} \equiv 1 + \frac{1}{5} u -
  \frac{2}{25} u^{2} + \frac{6}{125} u^{3}
  \mod x^{4},\\
  h & = x^{4} + h_{3} x^{3}+ h_{2} x^{2} + h_{1}x\\
  & = x^{4} + \frac{f_{19}}{5} \cdot x^{3} + \frac{-2 f_{19}^{2} + 5
    f_{18}}{25} \cdot x^{2} + \frac{6 f_{19}^{3} - 20 f_{18} f_{19} +
    25 f_{17}}{125} \cdot x.
\end{align*}
Now
$$
g_{4} = f_{16} - \frac{1}{125}\; (21 f_{19}^{4} - 90 f_{18}
f_{19}^{2} + 50 f_{18}^{2} + 100 f_{17} f_{19})
$$
is the coefficient of $x^{16}$ in $f-h^{5}$. Similarly, $g_{j}$ for
$j = 3,2,1$ is determined as the coefficient of $x^{4j}$ in $f -
\sum_{j
  < k \leq 5} g_{k} h^{k}$. These four coefficients of $g$ have
degrees $4$, $8$, $12$, and $16$, respectively, in the
$\nt_{5}$-coefficients of $f$.

To complete the picture, one can express the twelve
$\cnt_{5}$-coefficients of $f \in C_{20,5}$ as polynomials in the
seven $\nt$-coefficients. The polynomials have the following degrees
at $x^{15}$, $x^{14}$, $x^{13}$, $x^{11}, \ldots$: $
5,5,7,9,10,11,13,14,15,17,18,19. $
The B\'{e}zout number, that is, the product of these degrees, is
much larger
than the bound in Theorem \ref{geometry} below.
\end{example}

Using these facts, we give a geometric description of $\cnd$.

\begin{theorem}
  \label{geometry} Let $d$ be a proper divisor of $n$ and assume that
  $~char(F)$ does not divide $d$. Then $\cnd(F) = ~im
  \gamma_{n,d}$ is a closed irreducible algebraic subvariety of
  $P_n(F)$ of dimension $d+n/d-2$ and codimension $m_d$.  The
  Newton-Taylor method provides a polynomial section $\nu_{n,d} \colon
  \cnd \rightarrow P_d(F) \times P_{n/d}(F)$ of $\gamma_{n,d}$.
  Furthermore, $\gamma_{n,d}$ and $\nu_{n,d}$ are defined over $\Z$
  and $\Z[d^{-1}]$, respectively.  For $f = \sum_{1 \leq i \leq n} f_i
  x^i \in \cnd$, $\nu_{n,d} (f)$ depends only on the coefficients
  $f_i$ with $i \in \nt_d$. The degree of $\cnd$ is at most
  $d^{d+n/d-2}$.
\end{theorem}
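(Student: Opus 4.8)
The plan is to obtain all assertions from the fact that the Newton--Taylor construction inverts $\gnd$ coordinate by coordinate. Let $\rho\colon P_n\to\mathbb{A}^{\nt_d}$ be the coordinate projection of a polynomial onto its $\nt_d$-coefficients, and let $\sigma\colon\mathbb{A}^{\nt_d}\to P_n$ be the morphism that reconstructs a candidate pair $(g,h)$ from a would-be $\nt_d$-datum by the NT recipe --- Newton iteration for $h$ from the $e-1$ top coefficients, then the triangular recursion $g_d=1$ and $g_i=f_{ei}-\sum_{j>i}g_j\cdot(\text{coefficient of }x^{ei}\text{ in }h^j)$ for $i=d-1,\dots,1$ --- and then composes, i.e.\ $\sigma=\gnd\circ(\text{this reconstruction})$. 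I will show that $\rho$ and $\sigma$ restrict to mutually inverse isomorphisms between $\cnd$ and $\mathbb{A}^{\nt_d}$, and then bound $\deg\cnd$ through $\gnd$ itself rather than through $\sigma$.

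First I would write the NT formulas as honest polynomial maps. The Newton iteration expresses the coefficients of $h$ as polynomials over $\Z[d^{-1}]$ in $f_{n-1},\dots,f_{n-e+1}$ (it divides only by $d$), and iterated Euclidean division of $f$ by the monic $h$ expresses the generalized Taylor coefficients $G_0,\dots,G_d$ as polynomials over $\Z[d^{-1}]$ in the coefficients of $f$. Hence $f\in\cnd$ precisely when the finitely many non-constant coefficients of the $G_i$ --- explicit polynomials over $\Z[d^{-1}]$ in $f_1,\dots,f_{n-1}$ --- all vanish, and on that zero set $\nu_{n,d}(f)=(\sum_i G_i x^i,\,h)$ is the restriction of a globally defined polynomial map satisfying $\gnd\circ\nu_{n,d}=\mathrm{id}$; since $\gnd(g,h)=g\circ h$ has $\Z$-polynomial coefficients, the assertions on fields of definition follow at once. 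A direct reverse-polynomial computation --- using $\widetilde{g\circ h}\equiv\tilde h^{\,d}\pmod{x^e}$ for every monic $g$ and the uniqueness of $d$-th roots in $1+xF[x]/(x^e)$ when the characteristic of $F$ does not divide $d$ --- shows that $\gnd$ is injective, that $\rho\circ\sigma=\mathrm{id}_{\mathbb{A}^{\nt_d}}$, and that $\sigma\circ(\rho|_{\cnd})=\mathrm{id}_{\cnd}$; the last two identities amount to the observations that the top $e-1$ coefficients of $g\circ h$ depend only on $\tilde h^{\,d}\bmod x^e$ and that the coefficient of $x^{ei}$ in $\sum_j g_j h^j$ equals $g_i$ plus a combination of the $g_j$ with $j>i$. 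Consequently $\cnd=\mathrm{im}\,\gnd=\mathrm{im}\,\sigma=\{f\in P_n:\sigma(\rho(f))=f\}$ is Zariski-closed, is isomorphic to $\mathbb{A}^{\nt_d}$ and hence irreducible of dimension $\#\nt_d=d+n/d-2$ and codimension $m_d$, and $\nu_{n,d}=(\text{reconstruction})\circ(\rho|_{\cnd})$ depends only on the $\nt_d$-coordinates.

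For the degree bound I would use $\gnd\colon\mathbb{A}^{d+n/d-2}\to\mathbb{A}^{n-1}$ directly, not the forward $\nt_d$-parametrization (whose B\'ezout number is much larger, see \ref{exa:ghf}). Each coordinate of $\gnd$ is the coefficient of some power of $x$ in $h^d+\sum_{1\le j\le d-1}g_j h^j$, a polynomial of degree at most $d$ in the coefficients of $g$ and $h$: the coefficient of any power of $x$ in $h^j$ has degree at most $j$ in $h_1,\dots,h_{e-1}$, so $h^d$ contributes degree at most $d$ and $g_j h^j$ contributes degree at most $1+j\le d$. Now intersect $\cnd$ with a generic affine subspace $L$ of codimension $d+n/d-2$: pulling the $d+n/d-2$ linear forms defining $L$ back along $\gnd$ cuts out $\gnd^{-1}(L)\subseteq\mathbb{A}^{d+n/d-2}$ by $d+n/d-2$ equations of degree at most $d$. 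Since $\gnd$ is dominant onto the equidimensional $\cnd$, for generic $L$ this set is finite, so the affine B\'ezout inequality (e.g.\ \cite{hei83}) bounds $\#\gnd^{-1}(L)$ by $d^{\,d+n/d-2}$; and since $\gnd$ is injective, $\#(\cnd\cap L)=\#\gnd^{-1}(L)$ for generic $L$, giving $\deg\cnd\le d^{\,d+n/d-2}$.

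The step I expect to be the main obstacle is the last one. One has to make the degree-$\le d$ estimate for $\gnd$ uniform in the coordinate index (routine, but the leading term $h^d$ must be handled separately, since there $g_d=1$ is constant), and, more delicately, one must choose $L$ so that $\gnd^{-1}(L)$ is genuinely zero-dimensional and in bijection with $\cnd\cap L$, so that the affine B\'ezout bound applies exactly; this is precisely where the equidimensionality and the injectivity established in the second paragraph enter. By comparison, the first two paragraphs are a careful transcription of the Newton--Taylor algorithm already recalled before the statement, the only point really needing care being the round-trip identity $\rho\circ\sigma=\mathrm{id}$.
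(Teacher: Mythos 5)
Your proposal is correct, and for part of the theorem it takes a genuinely different route. For closedness, irreducibility and the dimension, the paper projectivizes: it extends $\gnd$ to a map $\bargnd$ on products of projective spaces, invokes the fact that projective morphisms are closed (after passing to $\overline{F}$), identifies $\barcnd\cap{\mathbb A}^{n-1}$ with $\cnd$, and gets irreducibility from irreducibility of $P_d\times P_{n/d}$ and the dimension from the existence of the section. You instead make the Newton--Taylor reconstruction a globally defined polynomial map $\sigma$ on ${\mathbb A}^{\nt_d}$ and verify the two round-trip identities $\rho\circ\sigma=\mathrm{id}$ and $\sigma\circ\rho|_{\cnd}=\mathrm{id}$, so that $\cnd=\{f\in P_n:\sigma(\rho(f))=f\}$ is cut out by explicit polynomial equations over $\Z[d^{-1}]$ and is isomorphic to ${\mathbb A}^{d+n/d-2}$. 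This is more elementary (no projective closure, no closed-image theorem, no base change to $\overline{F}$ for that step, and it works directly on $F$-points), and it yields slightly more than the paper's proof states, namely the graph description of $\cnd$ over the $\nt_d$-coordinates that the paper only illustrates in \ref{exa:ghf}; the price is that you must check the round trips carefully, which you do. For the degree bound your argument is essentially the paper's: intersect with a generic linear section of complementary dimension, pull the linear forms back along $\gnd$, whose coordinate functions have degree at most $d$, and apply the B\'ezout inequality of \cite{hei83}. The one real difference is that the paper never needs injectivity of $\gnd$: it only uses that each irreducible component of $\gnd^{-1}(\mathcal S)$ maps to a single point and bounds $\#\mathcal S$ by the number of components, so no finiteness of the preimage is required; you instead prove injectivity (correctly, via uniqueness of $d$-th roots with constant term $1$ when $\mathrm{char}\,F\nmid d$) and equate $\#(\cnd\cap L)$ with $\#\gnd^{-1}(L)$. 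That also works, but note two small points: the finiteness of $\gnd^{-1}(L)$ follows from injectivity together with finiteness of $\cnd\cap L$, not from ``dominance onto the equidimensional $\cnd$'' as you phrase it; and the degree count should be performed over the algebraic closure, which is harmless here since your section and injectivity arguments use only $\mathrm{char}\,F\nmid d$ and hence persist over $\overline{F}$.
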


\begin{proof}
Let $e=n/d$. To show that $\cnd$ is closed and irreducible, we embed
$P_n(F) = \{ (f_{n-1},\ldots, f_1) \in {\mathbb A}^{n-1}(F) \}$ in
$\Pp^{n-1}(F)$ via
$$
(f_{n-1},\ldots, f_1) \mapsto \bar f = (1:f_{n-1}:\ldots : f_1).
$$
The projective version $\bargnd$ of $\gnd$ is then
\begin{align*}
\bargnd & ((g_d: g_{d-1}:\ldots : g_1), (h_e: h_{e-1}\colon \ldots \colon h_1)) \\
& = \sum_{1\leq j \leq d} g_j h_e^{d-j} \bar h^j \in \Pp^{n-1}(F),
\end{align*}
where $\bar h = \sum_{1\leq \ell \leq e} h_\ell x^\ell$ and, with a
slight abuse of notation, $\bar h^j$ stands for the (projective)
vector of $n$ coefficients of the polynomial $\bar h^j$. This vector
is homogeneous in the variables $h_\ell$ of degree $j$.

The scalar extension of $\bargnd$ to
$\Pp^{d-1}(\overline{F})\times\Pp^{e-1}(\overline{F})\to
\Pp^{n-1}(\overline{F})$ is well--defined, where $\overline{F}$ is
an algebraic closure of $F$. As this extension is a closed mapping
which is defined over $F$, we conclude that $\bargnd$ is also a
closed mapping. In particular, $\barcnd = \text{im } \bargnd$ is
closed in $\Pp^{n-1}(F)$.

We now show that $\barcnd (F) \cap {\mathbb A}^{n-1}(F) = \cnd (F)$.
The inclusion \linebreak $\cnd(F) \subseteq \barcnd (F) \cap
{\mathbb A}^{n-1}(F)$ is clear. For the other inclusion, we take
some $f \in \barcnd (F) \cap {\mathbb A}^{n-1}(F)$ and $\bar g \in
\Pp^{d-1}(F)$ and $\bar h \in \Pp^{e-1}(F)$ with $\bar f = \bar g
\circ \bar h$. Since $f \in {\mathbb A}^{n-1}(F) = P_n(F)$, the
leading coefficient of $\bar f$ (at $x^n$) is nonzero. We normalize
$\bar f$ so that this coefficient equals $1$. The coefficient of $
\bar g \circ \bar h = \sum_{1\leq j \leq d} g_j h_e^{d-j} \bar h^j$
at $x^n$ equals $g_d \cdot ~lc (\bar h^d) = g_d \cdot h_e^d$. It
follows that $g_d h_e \neq 0$. After normalizing $\bar g$ and $\bar
h$ by dividing by their leading coefficients, we obtain polynomials
$g \in P_d(F)$ and $h \in P_e(F)$ with $f = g \circ h$.  This shows
the desired inclusion and the claim that $\cnd$ is closed in
$P_{n}$. Furthermore, as $P_d(F)\times P_e(F)$ is irreducible, it
follows that $\cnd = \text{im } \gnd$ is also irreducible.

We prove the degree estimate. The existence of the section
$\nu_{n,d}$ implies that $\dim \cnd = d+e-2$. Let $H_1,\dots,
H_{d+e-2}$ be hyperplanes of $P_n(F)$ with $$\#(\cnd\cap
H_1\cap\cdots\cap H_{d+e-2})=\deg \cnd.$$
Let $\mathcal{S}=\cnd\cap
H_1\cap\cdots\cap H_{d+e-2}$. Then $\#S =\deg C_{n,d}$ and%
$$\gnd^{-1}(\mathcal{S})=\gnd^{-1}(H_1)\cap\cdots\cap \gnd^{-1}(H_{d+e-2}).$$

The polynomial map $\gamma_{n,d}$ consists of $n-1$ integer
polynomials in the coefficients of $g$ and $h$, all of total degree
at most $d$.  Furthermore, for each $i \leq d+e -2$ there exists a
linear combination $w_i$ of the polynomials which define the
coordinates of $\gnd$ so that $\gnd^{-1}(H_i)=\{w_i=0\}$. Therefore,
$\deg\gnd^{-1}(H_i)\le d$ and, by the B\'ezout inequality (see,
e.g., Heintz \cite{hei83}, Fulton \cite{ful84}, Vogel \cite{vog84}),
it follows that \linebreak $\deg \gnd^{-1}(\mathcal{S})\le
d^{d+e-2}$. Let $\gnd^{-1}(\mathcal{S})=\bigcup_{1 \leq j \leq
k}X_j$ be the decomposition of $\gnd^{-1}(\mathcal{S})$ into
irreducible components. Since
$\gnd(\gnd^{-1}(\mathcal{S}))=\mathcal{S}$ and each irreducible
component $X_j$ of $\gnd^{-1}(\mathcal{S})$ is mapped by
$\gnd$ to a point of $\mathcal{S}$, we deduce that%
\begin{align*}
\deg \cnd=\#\mathcal{S}&\le k\\&\le\sum_{1 \leq j \leq k} \deg
X_j\\&=\deg \gnd^{-1}(\mathcal{S})\\&\le d^{d+e-2}.
\end{align*}

The Newton and Taylor algorithms are integral algorithms, except
that divisions by $d$ occur in Newton iteration.
\end{proof}

This precise description of $\cnd$, with the section  $\nu_{n,d}$,
is the basis for our bounds on the real and complex densities that
we consider.

The convex function $d+n/d$ of $d$ assumes its maximum among the
proper divisors of $n$ at $d=l$ and $d=n/l$, where $l$ is the least
prime number dividing $n$; see von zur Gathen \cite{gat13}. Thus the
two ``large'' components of $\cn$ are $C_{n,l}$ and $C_{n,n/l}$,
unless $n = l^{2}$, when they coincide. We will deal with the other
components of smaller dimension at the end of Section \ref{section:
density estimates in R}.

We also want to show that the sum of the two ``large'' densities
bounds the density of $\cn$ from below. To this end, it suffices to
prove that the intersection of the two components has small
dimension. Since both are irreducible, it suffices to show that they
are distinct. This follows easily from Ritt's Second Theorem.  In
fact, the following geometric variant of the normal form for Ritt's
Theorem in von zur Gathen \cite{gat14} provides precise bounds.
\begin{theorem}\label{th:nde}
\label{ritt2} Let $n$, $d$, and $e=n/d$ be as above, with $e > d
\geq 2$ in addition, $i = \gcd(d,e)$, $s= \lfloor e/d \rfloor$ and
$F$ a field of characteristic either $0$ or coprime to $n$.  Then
$X=C_{n,d}(F) \cap C_{n,e}(F) $ is a closed algebraic subvariety of
$P_n(F)$.

When $d \geq 3i$, then $X$ has exactly two irreducible components,
one of dimension $ 2i+s-1 $ and another one of dimension $2i$, and
the intersection of the two is irreducible of dimension $2i-1$.
When $d \leq 2i$, then $X$ is irreducible. It has dimension $d+e/d -
3/2$ if $d=2i$, and dimension $2d + e/d - 3$ if $d = i$.

In all cases, $\dim X < \dim C_{n,d}(F) = \dim C_{n,e}(F)$.
\end{theorem}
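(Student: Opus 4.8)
The plan is to read off the irreducible decomposition of $X$ from the normal form for Ritt's Second Theorem of \cite{gat14} and then carry out the dimension bookkeeping. Closedness is immediate from \ref{geometry}: $C_{n,d}(F)$ and $C_{n,e}(F)$ are closed in $P_n(F)$, hence so is $X$ (and $X$ is nonempty, since $x^n = x^d\circ x^e = x^e\circ x^d$ lies in it). For the structure, note that $f\in X$ exactly when $f = g\circ h = G\circ H$ with $g,H\in P_d(F)$ and $h,G\in P_e(F)$ — the swapped-degree situation classified by Ritt's Second Theorem. By \cite{gat14}, up to composition with linear polynomials on both sides, every such quadruple $(g,h,G,H)$ falls into one of finitely many explicit shapes: a ``first-kind'' shape carrying a free inner polynomial $w$ as a parameter (the monomial/power family) and a rigid ``second-kind'' shape (the Chebyshev/Dickson family). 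Each shape, together with the coefficients of $w$ and the two outer linear polynomials, yields a morphism $\phi\colon Y\to P_n(F)$ from an irreducible affine parameter variety $Y$ — the parametrization of the quadruples followed by $(g,h,G,H)\mapsto g\circ h=f$ — whose image lies in $X$. Then each $\overline{\phi(Y)}$ is irreducible, $X$ is the union of these finitely many closures, and the irreducible components of $X$ are the maximal ones among them.

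Next I would compute $\dim\overline{\phi(Y)} = \dim Y - \dim(\text{generic fibre of }\phi)$ for each shape. The fibre dimension equals the dimension of the group of redundancies in the parametrization — composing $g,h$ with a linear polynomial and its inverse, together with the internal rescalings the monomial shape permits — a small explicit constant, since each $f$ has only finitely many decompositions of each degree type. The resulting counts give $2i+s-1$ for the first-kind family (with $s=\lfloor e/d\rfloor$ governing $\deg w$) and $2i$ for the second-kind family, and a similar computation with the normal form shows these two meet in one further family, irreducible of dimension $2i-1$. When $d\ge 3i$ the two families are distinct and neither contains the other, so both are irreducible components and their intersection is as claimed. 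When $d\le 2i$ — i.e.\ $d\in\{i,2i\}$, since $i\mid d$ — the second-kind shape is absorbed into the first-kind one (for $d=2$ this is the linear equivalence $T_2\sim x^2$), leaving a single component, of dimension $2d+e/d-3$ when $d=i$ (equivalently $d\mid e$) and $d+e/d-3/2$ when $d=2i$ (note that $e/d$ is then a half-integer, so this number is an integer).

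Finally, $\dim X < \dim C_{n,d}(F) = \dim C_{n,e}(F) = d+e-2$ — the equality by \ref{geometry}, since $\dim C_{n,e} = e+n/e-2 = e+d-2$ — follows by elementary arithmetic in each case, using $e>d\ge 2$ and $i\mid d$, $i\mid e$. For instance, when $d\ge 3i$ one has $(d+e-2)-(2i+s-1) = (d-2i)+(e-s)-1\ge i+1-1 = i\ge 1$, and the remaining displayed dimensions are handled the same way.

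I expect the main obstacle to be the faithful transcription of the normal form of \cite{gat14} — stated there for the components $g,h,G,H$ — into a parametrization of the variety $X$ of composites $f$: pinning down the redundancy group precisely enough that the parameter counts land on exactly $2i+s-1$, $2i$, and $2i-1$, and verifying the inclusions (and their absence) among the family closures that fix the number of components in each of the cases $d\ge 3i$, $d=2i$, and $d=i$.
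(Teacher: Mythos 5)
Your outline is essentially the paper's own argument: both reduce everything to the normal form for Ritt's Second Theorem in \cite{gat14} and then do the dimension arithmetic, and your final inequalities (including the half-integer check for $d=2i$ and the estimate $(d+e-2)-(2i+s-1)\ge i$) match what the paper needs. The one caveat is that everything you flag as the ``main obstacle'' --- the exact counts $2i+s-1$, $2i$, $2i-1$, the inclusion of the trigonometric family in the exponential one when $d=2i$, and the description of the $d=i$ case --- is not derived in your proposal but merely asserted via a planned parameter/fibre count over the linear-redundancy group; the paper sidesteps that entire reconstruction by citing Theorem 6.3 of \cite{gat14} in its geometric form, which directly provides two injective polynomial parametrizations $\alpha_{\mathrm{exp}}$ and $\alpha_{\mathrm{trig}}$ of the monic original collisions (so no quotient by redundancies is needed), their image dimensions $2i+s-1$ and $2i$, their intersection $P_i\circ(x^{n/i^2})^{[F]}\circ P_i$ of dimension $2i-1$, the containment $\mathrm{im}\,\alpha_{\mathrm{trig}}\subseteq\mathrm{im}\,\alpha_{\mathrm{exp}}$ for $d=2i$, and $C_{n,d}\cap C_{n,e}=P_i\circ P_{e/d}\circ P_i$ for $d=i$. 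So your route would work but would amount to re-proving the cited theorem; as written, the substantive content of the statement rests on counts you have not carried out, whereas the remaining bookkeeping is correct and coincides with the paper's.
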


\begin{proof}
We first assume that $d \geq 2i$.  Theorem 6.3 from von zur Gathen
\cite{gat14} provides, expressed in geometric language, two
polynomial functions
\begin{align*}
\alpha_\text{exp} \colon & P_i \times F^s \times F \times P_i \rightarrow P_n, \\
\alpha_\text{trig} \colon & P_i \times F \times F \times P_i
\rightarrow P_n,
\end{align*}
so that $X = \text{im } \alpha_\text{exp} \cup \text{im }
\alpha_\text{trig}$. Here \emph{exp} stands for exponential and
\emph{trig} for trigonometric collisions. More precisely,
\begin{align*}
  \alpha_\text{exp} (u,w,a,v) & = u \circ (x^{d(e-sd)/i^2} w^{d/i}(x^{d/i}))^{[a]} \circ v, \\
  \alpha_\text{trig} (u,z,a,v) & = u \circ T_{n/i^{2}}(x,z)^{[a]} \circ v.
\end{align*}
Here $T_n$ is the \emph{Dickson polynomial} of degree $n$, closely
related to the Chebyshev polynomial and satisfying $T_n(x,0) = x^n$.
The monic (but possibly not original) polynomial $w = \sum_{0 \leq i
\leq s} w_i x^i$ of degree $s$ corresponds to the vector $(w_{s-1},
\ldots, w_0) \in F^s$, with $w_s =1$. The \emph{original shift}
$p^{[a]}$ of a polynomial $p \in F[x]$ by $a \in F$ is
$$p^{[a]}=(x-p(a)) \circ p \circ (x+a).$$
Furthermore, $\alpha_\text{exp}$ and $\alpha_\text{trig}$ are
injective, $\dim \text{im }\alpha_\text{exp} = 2i+s-1,$ and
\linebreak $\dim\text{im }\alpha_\text{trig} = 2i$. If $d=2i$ then
$\text{im } \alpha_\text{trig} \subseteq \text{im
}\alpha_\text{exp}$. Otherwise we have $d \geq 3i$ and
\begin{align*}
\text{im } \alpha_\text{exp } \cap \text{ im } \alpha_\text{trig} &
=
P_{i} \circ (x^{n/i^{2}})^{[F]} \circ P_{i} \\
& = \{u \circ ((x + a)^{n/i^{2}} -a^{n/i^2}) \circ v \colon u,v \in
P_{i}, a \in F\}
\end{align*}
has dimension $2i-1$. For the dimension inequality in this case, we
have $2i+s-1 \leq d+ e/2-1 \leq d+e-3$.

When $d = i$, the same Theorem 6.3 shows that $C_{n,d} \cap C_{n,e}
= P_{i} \circ P_{e/d} \circ P_{i}$ is irreducible of dimension
$2i+s-3 = 2d+e/d-3 < d+e-2 =\dim C_{n,d}$.  For more details, see
the cited paper.
\end{proof}

The main point here is that the dimension of this intersection is
less than the dimension of its two arguments.

\section{Bounding the height of $f$, $g$, and $h$}
\label{heightBound} For  the lower bounds of Theorems
\ref{th:prop-div} and \ref{th:composite} below, we analyze the
coefficient growth in the Newton-Taylor method. We start by making
more explicit the form of $f$, $g$, and $h$ in terms of the
Newton-Taylor coefficients of $f = g \circ h$. Recall that $n = d
\cdot e$ are the degrees.

Following the approach of Section \ref{sec:nt}, we consider $$u =
\sum_{1\leq i < e} u_i x^i = \sum_{1 \leq i < e} f_{n-i} x^i\equiv
\tilde f -1 \mod x^e$$ and $$v = \sum_{0\leq \ell < e} v_\ell
x^\ell\ \textrm{with }v^d \equiv 1+u \bmod x^e\textrm{ and
}v(0)=1.$$
The binomial expansion, as in Example \ref{exa:ghf}, says that
\begin{equation}
\label{binomialExpansion}
 v = (1+u)^{1/d}
= \sum_{0\leq \ell < e} \binom {1/d} \ell u^\ell \bmod x^e.
\end{equation}
Then we call $h = x^e \cdot v(x^{-1})$  the \emph{reverse} of $v$.
This differs slightly from the usual reverse $x^{\deg v} \cdot
v(x^{-1})$, since $\deg v < e$.

The Taylor iteration determines the coefficients of $g = \sum_{1
\leq j \leq d} g_j x^j$ as follows. We have $g_d = 1$, and for $j=
d-1, d-2, \ldots, 1$,
 $g_j$ is the coefficient of $x^{ej}$ in $f- \sum_{j<k\leq d} g_k h^k$.
Finally, $f = \sum_{1 \leq i \leq n} f_i x^i = g \circ h = \sum_{1
\leq j \leq d} g_j h^j$.

%
We first determine the ``degrees'' of $h$, $g$, and $f$ in terms of
the NT-coefficients of $f$. More precisely, if we consider the
coefficients $f_i$ with $i \in \nt_d$ as variables, then the
coefficients of $f$ (with $i\in\cnt_d$), $g$, and $h$ are
polynomials over $R= \Z[d^{-1}]$ in these variables.  In order to
make this rigorous, we introduce the generic polynomial $F^{\nt_d} =
\sum_{i \in \nt_d} F_i x^i$ in the set $\varnt=\{ F_i \colon i \in
\nt_d \}$ of indeterminates.

Since all three polynomials are monic, we also set $F_n = G_d = H_e
=1 \in \Z$; these are not indeterminates. We imitate the above
equations, but now in the new indeterminates rather than the
coefficients of $f$.
\begin{equation}
\label{polyDef}
\begin{aligned}
U   = & \sum_{1\leq i < e} U_i x^i  = \sum_{0 \leq i < e} F_{n-i} x^i, \\
V  = &  \sum_{0\leq \ell < e} \binom {1/d} \ell U^\ell \bmod x^e, \\
H  = &  \sum_{1\leq \ell \leq e} H_\ell x^\ell = \text{ reverse of } V, \\
G  =  & \sum_{1\leq j \leq d} G_j x^j, \text{ where }  \\
& \quad G_j = F_{ej}
 - (\text{coefficient of }
 x^{ej} \text{ in }  \sum_{j<k\leq d} G_k H^k ), \\
F  =  &  \sum_{1\leq i \leq n} F_i x^i =  \sum_{1 \leq j \leq d} G_j
H^j = G \circ H.
\end{aligned}
\end{equation}
All five quantities are polynomials in $R[x, \varnt]$, where $R = \Z
[d^{-1}]$; for $V$, this follows from the formula for linear Newton
iteration, which involves division only by $d$.
$F$, $G$, and $H$ are monic original, and each of their coefficients
$H_\ell$, $G_j$, and $F_i$ is a  polynomial in $R[\varnt]$. The
$G_j$ are well-defined for $j = d, d-1, \ldots, 1$, and all $F_i$
that occur in the first and fourth equation have $i \in \nt_d$. It
is convenient to express the degrees under consideration using the
(unusual) grading $~gr(F_i) = n-i$ for $i \in \nt_d$.
\begin{proposition}\label{fgh-degBound}
The polynomials are homogeneous of the following grades.
\begin{enumerate}
\item
$~gr(U_i) = i$ for $1 \leq i < e$,
\item
$~gr( V_\ell) = \ell$ for $0 \leq \ell < e$,
\item
$~gr (H_\ell) =  e-\ell \text{ for } 1 \leq \ell \leq e$,
\item
$~gr (G_j) = n-ej \text{ for } 1 \leq j \leq d$,
\item
$~gr (F_i) = n-i \text{ for } 1 \leq i \leq n$ with $i \not\in
\nt_d$.
\end{enumerate}
\end{proposition}
\begin{proof}
(ii) We have $V_0 =1$, and
 for $\ell \geq 1$, $V_\ell$ is the coefficient of $x^\ell$ in the sum defining $V$.
 Since $U$ is divisible by $x$, this coefficient is a sum of terms
 $U_{m_1} U_{m_2} \cdots U_{m_r}$ with $r \leq \ell$,
 positive integers $m_1, \ldots, m_r$, and
 ${m_1} + {m_2} + \cdots + {m_r} = \ell$, where we leave out the binomial coefficients.
 In particular, $~gr (V_\ell) =\ell$ or $V_\ell=0$.
 Furthermore, $F_{n-1}^\ell$ occurs in $U_1^\ell$ and in $V$ with nonzero coefficient $\binom {1/d} \ell$.

(iii) In the reverse $H = x^e \cdot V(x^{-1})$ of $V$, we have $~gr
(H_\ell) = e-\ell$.

(iv) The claim is shown by downward induction on $j$ from $d$ to $
1$. For $j=d$, we have $G_d = 1$, of grade $0$. For $j<d$, the
contribution of $\varnt$ to $G_j$ has grade $n-ej$. A summand $G_k
H^k$ contributes terms of the form
$$
G_k \cdot H_{\ell_1} \cdots H_{\ell_k}
$$
with positive integers $\ell_1, \ldots, \ell_k$ and $\ell_1 + \cdots
+ \ell_k = ej$. The grade of such a term equals the sum of the
grades of its factors, which by induction is $$n-ek + (e-\ell_1) +
\cdots + (e-\ell_k) = n-ej.$$ Finally, $F_{ej}$ occurs in $G_j$ with
nonzero coefficient $1$.

(v) $F_i$ is the coefficient of $x^i$ in $\sum_{1 \leq j \leq d} G_j
H^j $. Similarly as for (iv), a summand $G_j H^j$ contributes terms
of the form
$$
G_j \cdot H_{m_1} \cdots H_{m_j}
$$
with  $m_1 + \cdots + m_j = i$. The grade of such a term is $$n-ej +
(e-m_1) + \cdots (e-m_j) = n-i.$$ Furthermore, as $F_{ej}$ occurs
only in the summand $G_jH^j$, cancelation cannot occur unless
$H^j=0$ for every $j$.
\end{proof}

%

For a polynomial $f = \sum_i f_i x^i \in \C[x]$ with all $f_i \in
\C$, we consider its height (or infinity norm) $\| f \| = \max_i
|f_i|$. For $f \in \cnd$, we denote as $f^{\nt_d}\in \C^{m_d}$ the
vector of those coefficients of $f$ whose index is in $\nt_d$, and
by $\| f^{\nt_d}\| $ its norm.
 Proposition \ref{fgh-degBound} (v) implies that $\| f \|
= O( \|f^{\nt_d} \| ^n )$. Next we bound the coefficient implicit in
this estimate. We start by considering $u$ and $v$.

\begin{lemma} \label{h-bound}
Let $d, e \geq 2$ be integers, $0 \leq \ell <e$, $A\geq 2$ a real
number, $u,v \in \C[x]$ with $u(0)=0$, $v(0)=1$, $\deg u, \deg v <
e$, $\| u \| \leq A$, and $v^d \equiv (1+u) \bmod x^e$. Then the
following hold.
\begin{enumerate}
\item
$\| u^\ell \| \leq (e A)^\ell$,
\item
$v_0 = 1$ and $|v_l | \leq  (eA)^\ell $ for $\ell \geq 1$.
\end{enumerate}
\end{lemma}
\begin{proof}
(i) Since $u= \sum_{1\leq i <e} u_i x^i$ is a sum of at most $e-1$
summands, the expansion of $u^\ell$ contains not more than
$(e-1)^\ell < e^\ell$ summands, each of which is absolutely at most
$A^\ell$.

(ii) Equation (\ref{fgh-degBound}) holds for $v$, and $v_\ell$ is
the coefficient of $x^\ell$ in
\begin{equation}
\label{sumForV}
 v = (1+u)^{1/d}
= \sum_{0\leq m < e} \binom {1/d} m u^m
= 1 + \frac {u_1} d x + O(x^2) . 
\end{equation}
Now $\binom {1/d} 0 =1$, and for $m \geq 1$ we have
$$
\begin{aligned}
\biggl | \binom {1/d} m \biggr | & = \biggl | \frac {(1/d) \cdot (1/d-1) \cdots (1/d - m+1) } {m!} \biggr |
\\[1ex]
& = \biggl | \frac {1 \cdot (1-d) \cdots (1-(m-1)d)} {d^m \cdot m!}
\biggl | \, \\[1ex]&< \frac {d \cdot 2d \cdots (m-1)d} {d^m \cdot m!}
\\&= \frac 1 {d m}.
\end{aligned}
$$
For $C \geq 4$, we have $\sum_{1 \leq m \leq \ell} C^m/m \leq 2
C^\ell/\ell$, as follows by induction on $\ell \geq 1$. Since $d,e
\geq 2$ and $u^m$ with $m>\ell$ does not contribute to $v_\ell$, we
also have
\begin{align*}
| v_\ell | &\leq \sum_{1\leq m \leq \ell} \frac 1 {dm} \|u^m \|
\\&\leq \frac 1 2 \sum_{1\leq m \leq \ell} \frac 1 m (e A )^m \\&\leq
\frac {(e A)^\ell} \ell.
\end{align*}
\end{proof}


\begin{proposition}\label{fgh-constBound}
Let $d,e \geq 2$ be integers, $f, g, h \in \C[x]$ be monic original
polynomials of degrees $n=de$, $d$, $e$, respectively, $f = g \circ
h$, and $\| f^\nt \| \leq A$ with $A \geq 2$. Then the following
hold.
\begin{enumerate}
\item
$| h_\ell | \leq  (eA)^{e-\ell}  \text{ for } 1 \leq \ell \leq e$,
\item
$| g_j | \leq  e^{(d-j)(d+j+1)/2} (eA)^{n-ej} \text{ for } 1 \leq j
\leq d$,
\item
$| f_i | \leq 2 e^{d(d+1)/2} (eA)^{n-i} \text{ for } 1 \leq i \leq
n$ with $i \not\in \nt_d$.
\end{enumerate}
\end{proposition}

\begin{proof}
Since $h$ is the reverse of $v$, Lemma \ref{h-bound} (ii) implies
the claim (i).

For (ii), we have  $g_d=1$ and
$$
g_j = \text{ coefficient of } x^{ej} \text{ in } f-\sum_{j<k\leq d}
g_k h^k
$$
for $1\leq j < d$. Since $ej \in \nt_d$, the required coefficient of
$f$ occurs in $f^\nt$. We first bound the  coefficient of $ x^{i}$
in $ h^k = (\sum_{1\leq \ell \leq e} h_\ell x^\ell )^k$ for $1 \leq
i \leq n$. It is the sum of terms $h_{m_1} \cdots h_{m_k}$ with
integers $1 \leq m_1, \ldots, m_k \leq e$ and $m_1 + \cdots + m_k
=i$. By (i), each such term is bounded in absolute value by $$(e
A)^{(e-m_1) \cdots (e-m_k)}
 = (e A)^{ek-i}.$$
Since $h$ is a sum of $e$ monomials, there are at most $e^k$ such
terms that contribute to the coefficient in question. Except for
$i=n$ and $k=d$, the choice $m_1 = \cdots = m_k =e$ does not
contribute. Thus the absolute value of this coefficient of $x^i$ is
at most
\begin{equation}
\label{coeffxiBound} (e^k-1) (e A)^{ek-i}.
\end{equation}
For $1 \leq j <d$, we have with $i=ej$ that
\begin{equation}
\label{firstGbound}
| g_j |  \leq A  + \sum_{j<k\leq d} | g_k |  (e^k-1) (e A)^{ek-ej}.
\end{equation}

In (ii),  we claim that
\begin{equation}
\label{gbound} | g_j | \leq b_j (eA)^{n-ej}
\end{equation}
 with
$b_j =  e^{(d-j)(d+j+1)/2}$. We have $g_d = b_d = 1$ and $b_{d-1} =
e^d$. First, we show  that $$\sum_{j<k\leq d} b_k (e^k-1) < b_j$$ by
downward induction for $j=d-1, \ldots, 1$. For $j=d-1$, we have
$e^d-1 < e^d = b_{d-1}$. For $j< d-1$, we find
$$
\begin{aligned}
 \sum_{j<k\leq d} b_k (e^k-1)
& =  \sum_{j+1<k\leq d} b_k (e^k-1) + b_{j+1}( e^{j+1}-1)  \\
& < b_{j+1} + b_{j+1}( e^{j+1}-1) \\&= b_j.
\end{aligned}
$$

We will absorb the lonely term $A$ in (\ref{firstGbound}) into the
summand for $k=d-1$. First, we note that
\begin{equation}
\label{2bound} \frac 2 {e-1} + 2 \leq (2e)^e,
\end{equation}
since $e\geq 2$. (In fact, (\ref{2bound}) holds for $e \geq 1.55$.)
This implies that
\begin{equation}
\label{Aebound} \frac A {(e^{d-1}-1)(eA)^{n-ej}} + \frac A {(eA)^e}
\leq 1
\end{equation}
for $j < d$, since  (\ref{2bound}) is the special case $d=2$,
$j=d-1$, $A=2$ of (\ref{Aebound}), the left hand side of
(\ref{Aebound}) is monotonically decreasing in $d$ and $A$ and
increasing in $j$, and the special case takes the extreme values of
$d$, $j$, and $A$ under our assumptions. In turn, this means that
\begin{equation*}
\label{gdbound} \frac A {(e^{d-1}-1)(eA)^{n-ej}} +
(A+(e^d-1)(eA)^e)  (eA)^{-e} \leq e^d = b_{d-1}.
\end{equation*}
By (\ref{firstGbound}), we have $|g_{d-1} | \leq A +(e^d-1)(eA)^e$,
and thus
\begin{equation}
\label{gd-1bound}
\begin{aligned}
& A +  | g_{d-1} | (e^{d-1}-1)(eA)^{n-e-ej} \\
& \leq (e^{d-1}-1)(eA)^{n-ej} \big( \frac A {(e^{d-1}-1)(eA)^{n-ej}} \\
& \quad + (A + (e^d-1)(eA)^e) (eA)^{-e} \big) \\
& \leq
b_{d-1} (e^{d-1}-1)(eA)^{n-ej}.
\end{aligned}
\end{equation}
We finally prove (\ref{gbound}) by downward induction for $j=d,d-1,
\ldots, 1$, using (\ref{firstGbound}) and (\ref{gd-1bound}). The
cases where $j\in \{d, d-1\}$ are clear. For $j<d-1$, we separate
the summands for $k\in\{d,d-1\}$ and find
$$
\begin{aligned}
| g_j | &  \leq A  + \sum_{j<k\leq d} | g_k |  (e^k-1) (e A)^{ek-ej} \\
& = A + (e^d-1) (eA)^{n-ej} + | g_{d-1} | (e^{d-1}-1) (eA)^{n-e-ej} \\
& \quad + \sum_{j<k\leq d-2} | g_k |  (e^k-1) (e A)^{ek-ej} \\
& \leq  \sum_{j<k\leq d} b_k (eA)^{n-ek}  (e^k-1) (e A)^{ek-ej} \\
& = (eA)^{n-ej}  \sum_{j<k\leq d} b_k (e^k-1) \\&< b_j (eA)^{n-ej}.
\end{aligned}
$$

(iii) Since $f_i$ is the coefficient of $x^i$ in $\sum_{1\leq j \leq
d} g_j h^j$, we find from (\ref{coeffxiBound}) that
$$
\begin{aligned}
| f_i | \leq & \sum_{1\leq j \leq d}  e^{(d-j)(d+j+1)/2} (eA)^{n-ej}
\cdot e^j (eA)^{ej-i} \\
= &  \, e^{d(d+1)/2} (eA)^{n-i} \sum_{1\leq j \leq d} e^{-j(j-1)/2}.
\end{aligned}
$$
The exponents in the sum are pairwise different, so that its value
is at most the complete geometric sum, of value $e/(e-1) \leq 2$.
\end{proof}

According to Proposition \ref{fgh-degBound}, 
all $f_i$ (with $i\in\cnt_d$), $g_j$, and $h_\ell$ are homogeneous
of grades $n-i$, $n-ej$, and $e-\ell$, respectively. This implies
that the exponents of $A$ in Proposition \ref{fgh-constBound} cannot
be improved. However, the exponents of $e$ are less precisely
determined.

\begin{corollary}
\label{finalBound} Let $A, B \geq 2$ and $f = g \circ h \in \cnd$
with $g$ and $h$ monic original of degrees $d$ and $e$,
respectively, and $\|f ^\nt \| \leq A \leq {B^ {1/n}}/
{e^{1+(d+1)/2e}} $. Then the following bounds hold.
\begin{enumerate}
\item
$\| h \| \leq  (eA)^{e-1}$,
\item
$\| g \| \leq  e^{(d(d+1)/2-1} (eA)^{n-e}$,
\item
$\| f \| \leq 2 e^{d(d+1)/2} (eA)^{n-1} < e^{d(d+1)/2} (eA)^{n} \leq
B$.
\end{enumerate}
\end{corollary}

\section{Density estimates for $\cnd(\R)$ and $\cn(\R)$}
\label{section: density estimates in R} In this section we consider
the set $P_n(\R)$ of monic original polynomials of composite degree
$n$ with real coefficients and the subsets $\cn(\R)$ and $\cnd(\R)$
of $P_n(\R)$ for a proper divisor $d$ of $n$. Our aim is to obtain
density estimates on tubes around $\cnd(\R)$ and $\cn(\R)$.  We drop
the field $F = \R$ from our notation in this section.

We identify $P_n$ with $\R^{n-1}$ by mapping
$x^n+a_{n-1}x^{n-1}+\dots +a_1x \in P_n$ to $(a_{n-1},\dots,a_1) \in
\R^{n-1}$.  As shown above, $\cnd$ is an affine real variety of
dimension $d+{n}/{d}-2$. In particular, $\cnd$ has codimension at
least $n/2$, and thus its  (standard Lebesgue) volume is 0.  For a
meaningful concept, we take a specific $\epsilon$-tube around
$\cnd$. Namely, for each $f = \sum_{1\leq i
  \leq n} f_i x^i \in \cnd$ and $\epsilon > 0$, we define the
$\epsilon$-neighborhood of $f$ as
\begin{align*}
\label{real neighbor} U_\epsilon (f) = \bigl\{ u =  & \sum_{1\leq i
\leq n} u_i x^i \in P_n(\R) \colon
 u_i = f_i \text{ for } i \in \nt_d, \\
& |u_i - f_i| < \epsilon \text{ for } i \in \cnt_d \bigr\}.
\end{align*}
Thus $U_\epsilon (f)$ is an open $m_d$-dimensional hypercube $(-
\epsilon, \epsilon)^{m_{d}}$ in $P_n$. Around each coefficient $f_i$
with $i \in \cnt_d$ we have a real interval of length $2\epsilon$.
We also set
\begin{equation}\label{eq:cnd}
U_\epsilon (\cnd) = \bigcup_{f \in \cnd} U_\epsilon (f).
\end{equation}

In order to have finite volumes, we take a bound $B>0$ on the
coefficients and consider the $(n-1)$-dimensional hypercube
$$P_{n,B} =\bigl\{f=\sum_{1\le i\le n}f_ix^i\in P_n \colon |f_i|<B \text{ for } 1\leq i < n
\bigr\}$$ around $P_{n}$ and its intersection with the
$\epsilon$-tube
$$
U_{\epsilon,B}(\cnd)=U_\epsilon (\cnd)\cap P_{n,B}.
$$
Our main purpose is to obtain estimates on the density
$~den_{\epsilon,B}(C_{n,d})$ of the $\epsilon$-tube in $P_{n,B}$,
namely
\begin{equation}\label{eq: definition density of cnd(R)}
  ~den_{\epsilon,B}(C_{n,d})  = \frac{~vol
    \left(U_{\epsilon,B} (\cnd)
      \right)}{~vol
    (P_{n,B})}= \frac{~vol \left(U_{\epsilon,B} (\cnd)
      \right)}{(2B)^{n-1}}.
\end{equation}

In a slightly different model of our situation, we might allow
arbitrary leading coefficients in our polynomials, rather than just
$1$. It would then be sufficient to just consider the unit hypercube
with $B=1$ and scale the resulting density. However, our approach is
overall more convenient and allows an easier comparison with
previous work; see Section \ref{discus}.

\begin{example}\label{exa:subspace}
  For perspective, we calculate the density of the linear subspace $L
  = \R^{k}\times \{0\}^{n-k} \subseteq \R^{n}$. For $x \in L$, we take
  $U_{\epsilon}(x) = \{(x_{1}, \ldots, x_{k})\} \times (-\epsilon,
  \epsilon)^{n-k}$ and have, for $B > \epsilon$,
\begin{align}\label{al:UepsiL}
\begin{aligned}
  U_{\epsilon}(L)& = \R^{k} \times (-\epsilon, \epsilon)^{n-k},\\
 ~den_{\epsilon, B}(L)& = \frac{~vol\big((-B, B)^{k} \times (-\epsilon,
    \epsilon)^{n-k}\big)}{(2B)^{n}} = \left(\frac{\epsilon}{B}\right)^{n-k}.
\end{aligned}
\end{align}
\end{example}

Let $\chi \colon P_{n,B}\to\{0,1\}$ be the characteristic function
of
 $U_{\epsilon,B} (\cnd) \subseteq P_{n,B}$. Then
 the density is
$$~den_{\epsilon,B}(C_{n,d})
=\frac{1}{(2B)^{n-1}}\int_{P_{n,B}}\chi(a)\, \mathrm{d}a.$$

For a subset $S\subseteq N$ of cardinality $s$, we consider
the projection $\pi^S \colon \R^{n-1}\to\R^s$ onto the coordinates
in $S$: $\pi^S(a_{n-1},\dots,a_1)=(a_i \colon i\in S)$. Furthermore,
for a subset $C\subseteq P_n$, we write $C^S$ for $ \pi^S (C)$. By
reordering the coordinates, we can express
 the hypercube $P_{n,B}$ as the Cartesian product
$P_{n,B}=P_{n,B}^{\nt_d}\times P_{n,B}^{\cnt_d}$. According to
Fubini's theorem we have
\begin{align}
\begin{aligned}
~vol \bigl(U_{\epsilon,B} (\cnd)\bigr)&=
\int_{P_{n,B}}\chi(a) \, \mathrm{d} a \\
&=\int_{P_{n,B}^{\nt_d}}\Big(\int_{P_{n,B}^{\cnt_d}}
\chi(a^{\nt_d},a^{\cnt_d}) \, \mathrm{d}a^{\cnt_d} \Big)\,
\mathrm{d}a^{\nt_d}.
 \label{eq: volume via Fubbini th}
\end{aligned}
\end{align}
Here $(a^{\nt_d}, a^{\cnt_{d}})$ are the coordinates of $a \in
P_{n,B}$ in the product representation, and $a^{\nt_{d}}$ refers to
a point in $P^{\nt_{d}}_{n,B}$.

\begin{lemma}\label{lemma: upper bound iterated integral}
  Let $0 < \epsilon < B$ and $b \in P_{n,B}^{\nt_d}$.
  \begin{enumerate}
\item
\label{lemma: upper bound iterated integral-1} We have
$$
\int_{P_{n,B}^{\cnt_d}}\chi(b, c) \, \mathrm{d}  c  \le
(2\epsilon)^{m_d},
$$
where $c$ ranges over $P_{n,B}^{\cnt_{d}}$.
\item
\label{lemma: upper bound iterated integral-2} Let $f\in\cnd$ be the
unique element with $\pi^{\nt_d} (f) = b$. If
$U_{\epsilon}(f)\subseteq P_{n,B}$, then equality holds in
\eqref{lemma: upper bound iterated integral-1}.
  \end{enumerate}
\end{lemma}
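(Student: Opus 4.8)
The plan is to fix $b$, identify the inner slice $\{c \in P_{n,B}^{\cnt_d} \colon \chi(b,c) = 1\}$ as an axis-parallel box, and then read off its volume coordinate by coordinate.

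The one structural ingredient I need is that the restriction of $\pi^{\nt_d}$ to $\cnd$ is injective. This is immediate from \ref{geometry}: since $\nu_{n,d}$ is a section of $\gnd$ we have $f = \gnd(\nu_{n,d}(f))$ for every $f \in \cnd$, and $\nu_{n,d}(f)$ depends only on $\pi^{\nt_d}(f)$; so $\pi^{\nt_d}(f) = \pi^{\nt_d}(f')$ for $f,f' \in \cnd$ forces $\nu_{n,d}(f) = \nu_{n,d}(f')$ and hence $f = f'$. Consequently at most one $f \in \cnd$ has $\pi^{\nt_d}(f) = b$. If none does, then no point $(b,c)$ lies in $U_\epsilon(\cnd) = \bigcup_{f' \in \cnd} U_\epsilon(f')$, since membership in some $U_\epsilon(f')$ would force $\pi^{\nt_d}(f') = b$; thus $\chi(b,\cdot) \equiv 0$ and \ref{lemma: upper bound iterated integral-1} holds with integral $0$.

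So I may assume the unique $f \in \cnd$ with $\pi^{\nt_d}(f) = b$ exists; this is precisely the $f$ of part \ref{lemma: upper bound iterated integral-2}. For $c \in P_{n,B}^{\cnt_d}$ the point $(b,c)$ already lies in $P_{n,B}$, so $\chi(b,c) = 1$ iff $(b,c) \in U_\epsilon(\cnd)$, iff $(b,c) \in U_\epsilon(f)$ (by the injectivity just proved), iff $|c_i - f_i| < \epsilon$ for every $i \in \cnt_d$. Hence
$$
\{c \in P_{n,B}^{\cnt_d} \colon \chi(b,c) = 1\} \;=\; \prod_{i \in \cnt_d} \bigl((f_i - \epsilon, f_i + \epsilon) \cap (-B, B)\bigr),
$$
an $m_d$-dimensional box whose $i$-th edge has length at most $2\epsilon$ (the length of $(f_i - \epsilon, f_i + \epsilon)$); multiplying the $m_d$ edge lengths gives the bound $(2\epsilon)^{m_d}$ of part \ref{lemma: upper bound iterated integral-1}. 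For part \ref{lemma: upper bound iterated integral-2}, the hypothesis $U_\epsilon(f) \subseteq P_{n,B}$ says exactly that each $\cnt_d$-edge $(f_i - \epsilon, f_i + \epsilon)$ of the box $U_\epsilon(f)$ lies inside $(-B,B)$, so each intersection above equals all of $(f_i - \epsilon, f_i + \epsilon)$, of length exactly $2\epsilon$, and the product equals $(2\epsilon)^{m_d}$. I do not expect a real obstacle here: the whole content is the injectivity of $\pi^{\nt_d}$ on $\cnd$ furnished by the Newton-Taylor section, after which the statement is just the volume of a product of intervals.
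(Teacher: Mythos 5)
Your proof is correct and follows essentially the same route as the paper: use the Newton--Taylor section from \ref{geometry} to pin down the (at most one) $f\in\cnd$ with $\pi^{\nt_d}(f)=b$, identify the slice $\{c\colon\chi(b,c)=1\}$ with $U_\epsilon(f)\cap P_{n,B}$, and bound its volume by that of the $m_d$-dimensional box $(2\epsilon)^{m_d}$, with equality when $U_\epsilon(f)\subseteq P_{n,B}$. Your explicit treatment of the case where no such $f$ exists is a harmless extra precaution (the paper instead invokes existence directly from the section), not a different approach.
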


\begin{proof}
The existence and uniqueness of $f$ follow from the section
$\nu_{n,d} \colon$ $\cnd \to P_d \times P_{n/d} $ (Theorem
\ref{geometry}). For any $c \in P_{n,B}^{\cnt_{d}}$, we have
$$
\chi (b,c) = 1 \Longleftrightarrow (b,c) \in U_{\epsilon,B}(\cnd)
\Longleftrightarrow (b,c) \in U_{\epsilon}(f) \cap P_{n,B}.
$$
Therefore
\begin{align*}
  \int_{P_{n,B}^{\cnt_d}}\chi(b, c) \, \mathrm{d} c
&\le  \int_{U_\epsilon(f)} \mathrm{d}   c =( 2\epsilon)^{m_d}. 
\end{align*}
If $U_\epsilon (f) \subseteq P_{n,B}$, then equality holds. This
shows both claims.
\end{proof}

We derive the following upper bound on the density of $\cnd$.
\begin{proposition}\label{prop: upper bound density}
With notation and assumptions as above, we have
%
$$~den_{\epsilon,B}(\cnd) \le \left(\frac{\epsilon}{B}\right)^{m_d}.$$
\end{proposition}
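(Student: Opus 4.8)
The plan is to combine Fubini's theorem (equation \eqref{eq: volume via Fubbini th}) with the pointwise bound from \ref{lemma: upper bound iterated integral}\eqref{lemma: upper bound iterated integral-1}. First I would start from the Fubini decomposition
$$
~vol\bigl(U_{\epsilon,B}(\cnd)\bigr)=\int_{P_{n,B}^{\nt_d}}\Big(\int_{P_{n,B}^{\cnt_d}}\chi(a^{\nt_d},a^{\cnt_d})\,\mathrm{d}a^{\cnt_d}\Big)\,\mathrm{d}a^{\nt_d}.
$$
For each fixed $b=a^{\nt_d}\in P_{n,B}^{\nt_d}$, \ref{lemma: upper bound iterated integral}\eqref{lemma: upper bound iterated integral-1} bounds the inner integral by $(2\epsilon)^{m_d}$. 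Hence the whole integral is at most $(2\epsilon)^{m_d}$ times the volume of $P_{n,B}^{\nt_d}$, which is the $\#\nt_d$-dimensional hypercube of side $2B$, so it has volume $(2B)^{\#\nt_d}$.

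Next I would assemble the pieces. Using $\#\nt_d+m_d=n-1$ (this is \eqref{dim}: $\#\nt_d=d+n/d-2$ and $m_d=n-d-n/d+1$, summing to $n-1$), we get
$$
~vol\bigl(U_{\epsilon,B}(\cnd)\bigr)\le (2\epsilon)^{m_d}\,(2B)^{\#\nt_d}=(2\epsilon)^{m_d}(2B)^{n-1-m_d}.
$$
Dividing by $~vol(P_{n,B})=(2B)^{n-1}$ as in the density definition \eqref{eq: definition density of cnd(R)} yields
$$
\den_{\epsilon,B}(\cnd)=\frac{~vol\bigl(U_{\epsilon,B}(\cnd)\bigr)}{(2B)^{n-1}}\le\frac{(2\epsilon)^{m_d}}{(2B)^{m_d}}=\Big(\frac{\epsilon}{B}\Big)^{m_d},
$$
which is exactly the claimed bound.

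There is essentially no obstacle here: the argument is a two-line consequence of the preceding lemma and the Fubini identity, plus the dimension count $\#\nt_d+m_d=n-1$. The only point that deserves a word of care is measurability — one must know that $\chi$ is measurable so that Fubini applies and the inner integral is a well-defined measurable function of $b$; but $U_{\epsilon,B}(\cnd)$ is a union of open boxes over the (Zariski-closed, hence Lebesgue-measurable) base $\cnd$, and more to the point the Fubini decomposition \eqref{eq: volume via Fubbini th} is already granted in the text, so I would simply invoke it. Thus the whole proof is: apply Fubini, bound the inner integral by \ref{lemma: upper bound iterated integral}\eqref{lemma: upper bound iterated integral-1}, multiply by $(2B)^{\#\nt_d}$, use $\#\nt_d=n-1-m_d$, and divide by $(2B)^{n-1}$.
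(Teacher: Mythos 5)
Your argument is correct and is essentially the same as the paper's: apply the Fubini decomposition, bound the inner integral by $(2\epsilon)^{m_d}$ via part \eqref{lemma: upper bound iterated integral-1} of the lemma, integrate over $P_{n,B}^{\nt_d}$ to get $(2\epsilon)^{m_d}(2B)^{n-1-m_d}$, and divide by $(2B)^{n-1}$. Your extra remark on measurability and the explicit use of $\#\nt_d+m_d=n-1$ are fine and change nothing of substance.
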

\begin{proof}
Combining \eqref{eq: volume via Fubbini th} and Lemma \ref{lemma:
upper bound iterated integral}, we obtain
\begin{align*}
~vol \left(U_{\epsilon,B}(\cnd)\right)&=
\int_{P_{n,B}}\chi(a) \,\mathrm{d}a\nonumber\\
&=\int_{P_{n,B}^{\nt_d}}\left(\int_{P_{n,B}^{\cnt_d}}
\chi(a_{\nt_d},a^{\cnt_d})\mathrm{d}a^{\cnt_d}
\right)\mathrm{d}a^{\nt_d}\nonumber\\
&\le\int_{P_{n,B}^{\nt_d}}(2\epsilon)^{m_d} \mathrm{d}a^{\nt_d}\\
&=(2\epsilon)^{m_d}(2B)^{n-1-m_d}.
\end{align*}

Using \eqref{eq: definition density of cnd(R)},
it follows that
\begin{align*}
~den_{\epsilon,B}(\cnd)
\le\frac{(2\epsilon)^{m_d}(2B)^{n-1-m_d}}{(2B)^{n-1}}
=\left(\frac{\epsilon}{B}\right)^{m_d}.
\end{align*}
\end{proof}

Now we derive a lower bound. We set
\begin{equation}
\label{alpha} \alpha_{n,d} = (d/n)^{1+d(d+1)/2n}
\end{equation}
 and $A_B = \alpha_{n,d} B^{1/n}$.
From Corollary \ref{finalBound} (iii), we know that if $B\ge
(2/\alpha_{n,d})^n$ and $f\in\cnd$ is such that $f^{\nt_d}\in P_{n,
A_{B-\epsilon}}^{\nt_d}$, then $U_\epsilon(f)\subset
U_{\epsilon,B}(\cnd)$.

\begin{proposition}\label{prop: lower bound density}
With notation and assumptions as above, and assuming that
$B-\epsilon \geq (2/\alpha_{n,d})^n$, we have
\begin{align*}
~den_{\epsilon,B}(\cnd) \ge \left(\frac{\epsilon}{B}\right)^{m_d}
\bigg(\frac{\alpha_{n,d} (B-\epsilon)^{1/n}} B
\bigg)^{d+\frac{n}{d}-2}.
\end{align*}
\end{proposition}

\begin{proof}
We let
 $$
V = \bigcup_{\{f \in \cnd : f^{\nt_d}\in
P_{n,A_{B-\epsilon}}^{\nt_d}\} }
  U_{\epsilon}(f) \subseteq U_{\epsilon,B}(\cnd)
$$
and $\chi_\epsilon\colon P_{n,B}\to\{0,1\}$ be the characteristic
function of $V$. Since $V \subseteq U_{\epsilon,B}(\cnd)$, it
follows that $\chi_\epsilon(a)\le\chi(a)$ for every $a\in P_{n,B}$.
Using Lemma \ref{lemma: upper bound iterated integral}\eqref{lemma:
upper bound iterated integral-2},
we find

\begin{align*}
  \int_{P_{n,B}}\chi(a)\, \mathrm{d}a&\ge
  \int_{P_{n,B}}\chi_\epsilon(a)\, \mathrm{d}a\\
  & =\int_{P_{n,A_{B-\epsilon}}^{\nt_d}}  \Big(\int_{P_{n,
        B}^{\cnt_d}}
    \chi_\epsilon(a^{\nt_d},a^{\cnt_d}) \, \mathrm{d}a^{\cnt_d}
  \Big)\mathrm{d}a^{\nt_d} \\
  & =
\int_{P_{n,
A_{B-\epsilon}}^{\nt_d}}(2\epsilon)^{m_d}\mathrm{d}a^{\nt_d}\\
&=(2\epsilon)^{m_d}\big(2A_{B-\epsilon}\big)^{n-1-m_d}.
\end{align*}
Dividing by $~vol (P_{n,B})=(2B)^{n-1}$ and using \eqref{dim} yields
the claimed bound.
\end{proof}

We summarize the results of Propositions \ref{prop: upper bound
  density} and \ref{prop: lower bound density} as follows.
\begin{theorem}\label{th:prop-div}
  Let $0 < \epsilon < B$ be as in Proposition \ref{prop: lower bound density} and let $d$ be a proper divisor of $n$.
  Then we have the following bounds on the density of the
  $\epsilon$-tube around $C_{n,d}(\R)$:
$$
 c_d(\epsilon,B)\cdot\left(\frac{\epsilon}{B}\right)^{n-d-\frac{n}{d}+1} \le
~den_{\epsilon,B}(C_{n,d}(\R))\le
\left(\frac{\epsilon}{B}\right)^{n-d-\frac{n}{d}+1},
$$
where $c_d(\epsilon,B)=\big(\frac{(B-\epsilon)^{\frac{1}{n}}}{B}
(\frac{d}{n})^{1+\frac{d(d+1)}{2n}}\big)^{d+\frac{n}{d}-2}$.
\end{theorem}

We thus have good bounds on the irreducible components of $C_{n}$ in
\eqref{allPolys}.  How to get such bounds for $C_{n}$ itself? In
Theorem \ref{th:prop-div}, we consider $\epsilon$-tubes around
$\cnd$ of a direction and a dimension that varies with $d$. For
$C_{n}$, it seems appropriate to consider $\epsilon$-tubes of the
same dimension for all $d$, as follows.

We let $l$ be the least prime number dividing the composite integer
$n$.  If $n = l^2$, then $\cn = C_{n,l}$ has just one component.
Otherwise, Theorem \ref{geometry} shows that $\cn(\R) \subseteq
\R^{n-1}$ has two ``large'' components, namely $C_{n,l}$ and
$C_{n,n/l}$, each of dimension $l + n/l-2 = ~dim C_{n}$.  As a
consequence, for the density of $\cn$ we consider $\epsilon$--tubes
of the same dimension $m_l=n-1-\dim \cn$ around each component of
$\cn$. For a proper divisor $d\not\in \{l,n/l\}$ of $n$, we have
$\dim \cnd< \dim\cn$ and thus $\dim \cnd+m_l<n-1$. Then any
$m_l$--dimensional $\epsilon$--tube around $\cnd$ has volume and
density equal to zero. Furthermore, Theorem \ref{th:nde} implies
that the sum of the two ``large'' densities bounds the density of
$\cn$ from below.  In other words, we define $$~vol_{\epsilon,
B}(C_{n}) = ~vol(U_{\epsilon, B}(C_{n,l}) \cup U_{\epsilon, B}(C_{n,
n /l}))$$ and $$~den_{\epsilon,
  B}(C_{n}) = ~vol_{\epsilon, B}(C_{n})/ ~vol(P_{n,B}).$$
Setting
\begin{align}\label{al:delta}
\delta_n =
 \begin{cases}
1  & \text{if } n = l^2,\\
2 & \text{otherwise},
\end{cases}
\end{align}
we obtain the following result.
\begin{theorem}\label{th:least}
Let  $0 < \epsilon<B$ be such that $B-\epsilon\ge
(2/\alpha_{n,d})^n$ and let $l$ be the least prime number dividing
the composite integer $n$. Then
$$\delta_nc_l(\epsilon,B) \left(\frac{\epsilon}{B}\right)^{n-l-n/l+1}
\le ~den_{\epsilon,B}(\cn(\R))\le \delta_n
\left(\frac{\epsilon}{B}\right)^{n-l-n/l+1}.$$
\end{theorem}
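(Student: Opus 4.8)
The plan is to derive Theorem~\ref{th:least} from Theorem~\ref{th:prop-div} together with the dimension count in Theorem~\ref{th:nde}, treating the two cases $n=l^2$ and $n\neq l^2$ separately. First I would dispose of the case $n=l^2$: here $\cn=C_{n,l}$ and $\delta_n=1$, so the asserted inequalities are literally the content of Theorem~\ref{th:prop-div} applied with $d=l$ (note $n/l=l$, so the exponents $n-l-n/l+1$ and $l+n/l-2$ agree with those in Theorem~\ref{th:prop-div}). Nothing further is needed in this case.

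For $n\neq l^2$ we have two ``large'' components $C_{n,l}$ and $C_{n,n/l}$, both of dimension $l+n/l-2$, hence the tubes $U_{\epsilon,B}(C_{n,l})$ and $U_{\epsilon,B}(C_{n,n/l})$ are both $m_l$-dimensional with $m_l=n-l-n/l+1$. The key point is inclusion--exclusion:
\begin{align*}
~vol_{\epsilon,B}(C_n) &= ~vol\bigl(U_{\epsilon,B}(C_{n,l})\bigr) + ~vol\bigl(U_{\epsilon,B}(C_{n,n/l})\bigr)\\
&\quad - ~vol\bigl(U_{\epsilon,B}(C_{n,l})\cap U_{\epsilon,B}(C_{n,n/l})\bigr).
\end{align*}
The upper bound is then immediate: drop the (nonnegative) intersection term, apply Proposition~\ref{prop: upper bound density} to each of the two summands, and sum to get $2(\epsilon/B)^{m_l}=\delta_n(\epsilon/B)^{m_l}$ after dividing by $~vol(P_{n,B})$. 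For the lower bound, the disjointness argument is what carries the day: I would show that the intersection tube has volume zero. Indeed $U_{\epsilon,B}(C_{n,l})\cap U_{\epsilon,B}(C_{n,n/l})$ is contained in an $m_l$-dimensional tube around $X=C_{n,l}\cap C_{n,n/l}$ — more precisely, a point lying in both tubes is obtained from a point of $C_{n,l}$ by perturbing the $\cnt_l$-coordinates and also from a point of $C_{n,n/l}$ by perturbing the $\cnt_{n/l}$-coordinates, and one checks the perturbed point stays within distance $O(\epsilon)$ of $X$ in a fixed $(n-1-\dim X)$-dimensional direction. By Theorem~\ref{th:nde}, $\dim X<\dim C_n=l+n/l-2$, so $\dim X+m_l<n-1$, and any such tube has $(n-1)$-dimensional volume $0$. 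Hence the intersection term vanishes, and $~vol_{\epsilon,B}(C_n)$ equals the sum of the two individual tube volumes; applying Proposition~\ref{prop: lower bound density} to each and dividing by $(2B)^{n-1}$ yields the lower bound $2(\epsilon/B)^{m_l}(1-\epsilon/B)^{l+n/l-2}=\delta_n(\epsilon/B)^{m_l}(1-\epsilon/B)^{l+n/l-2}$.

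The step I expect to be the main obstacle is making rigorous the claim that the intersection of the two tubes is negligible. The tubes are attached in \emph{different} fixed directions (the $\cnt_l$-directions versus the $\cnt_{n/l}$-directions), so the intersection is not simply a tube around $X$ in one fixed direction; one must argue that each point of the intersection is within $O(\epsilon)$ of $X$ and lies in a bounded-dimensional family, so that its contribution to Lebesgue measure is zero regardless of the precise geometry. Since we only need a measure-zero conclusion (not a quantitative density bound) for this intersection, it suffices to note that $X$ is a variety of dimension $<\dim C_n$, that each tube meets $X$ in a set of the ambient-variety's own dimension plus at most $m_l$ normal directions, and that $m_l+\dim X<n-1$; the remaining technicalities are the routine observations already invoked in the paragraph preceding the theorem in the excerpt. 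I would phrase the argument to lean directly on that discussion, so that the proof reduces to: (i) inclusion--exclusion, (ii) Propositions~\ref{prop: upper bound density} and~\ref{prop: lower bound density} for each component, and (iii) the vanishing of the intersection volume via the dimension bound of Theorem~\ref{th:nde}.
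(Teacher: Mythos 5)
Your route --- splitting off the case $n=l^2$, writing the density of $\cn$ via the two tubes $U_{\epsilon,B}(C_{n,l})$ and $U_{\epsilon,B}(C_{n,n/l})$, invoking Propositions \ref{prop: upper bound density} and \ref{prop: lower bound density} for each, and using \ref{th:nde} to control the overlap --- is the same as the paper's, which justifies the theorem only by the informal paragraph preceding it. The genuine gap is your central claim that the intersection of the two tubes has volume zero; that claim is false. By \ref{geometry} (as used in \ref{lemma: upper bound iterated integral}), \emph{every} choice $b$ of $\nt_l$-coordinates determines a unique $f\in C_{n,l}$ with $\pi^{\nt_l}(f)=b$, and $b\mapsto f$ is a polynomial, hence continuous, map. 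So $U_\epsilon(C_{n,l})$ is the open, $(n-1)$-dimensional set of all $u$ whose $\cnt_l$-coordinates are within $\epsilon$ of those of the $f$ determined by $u^{\nt_l}$, and likewise for $C_{n,n/l}$. Consequently, if $x$ is a point of $X=C_{n,l}\cap C_{n,n/l}$ interior to $P_{n,B}$ (for instance the polynomial $x^n$, the origin of $P_n$), then every $u$ in a sufficiently small full-dimensional ball around $x$ lies in \emph{both} tubes; the intersection therefore has positive Lebesgue measure. The dimension count $\dim X+m_l<n-1$ from \ref{th:nde} shows that an $m_l$-dimensional tube attached to $X$ in a fixed direction is a null set; it says nothing about the intersection of the two $(n-1)$-dimensional tubes, which is the set appearing in your inclusion--exclusion. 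Also, ``within $O(\epsilon)$ of $X$'' is not a measure-zero condition (such a neighborhood has measure of order $\epsilon^{\,\mathrm{codim}\,X}$), and even that proximity claim needs an argument: being $\epsilon$-close to each of two varieties does not by itself give $O(\epsilon)$-closeness to their intersection.

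Because of this, your identity $\mathrm{vol}_{\epsilon,B}(\cn)=\mathrm{vol}(U_{\epsilon,B}(C_{n,l}))+\mathrm{vol}(U_{\epsilon,B}(C_{n,n/l}))$ fails, and only the upper bound follows as you wrote it (dropping the intersection term is fine there). For the lower bound, inclusion--exclusion requires the quantitative statement that the volume of the overlap is at most the slack between the true tube volumes and the lower bounds of \ref{prop: lower bound density}; for instance, one would need to show that the set of $\nt_l$-coordinate vectors $b$ whose associated $f\in C_{n,l}$ comes within $2\epsilon$ of $C_{n,n/l}$ has sufficiently small measure. That is a quantitative transversality-type estimate along $X$, not a consequence of the bare dimension inequality of \ref{th:nde}. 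To be fair, the paper disposes of this point in one sentence (``the sum of the two large densities bounds the density of $\cn$ from below''), so the obstacle you flagged is real and is not resolved in the paper's own sketch either; but your proposed resolution --- declaring the overlap a null set --- is incorrect rather than a routine observation, so the lower-bound half of your proof does not stand.
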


The approximation factor $c_l(\epsilon,B)$ in the lower bound
tends to a constant smaller than $1$, depending only on
$n$, when $\epsilon/B$ gets small compared to $n$.

\section{Density estimates for $\cnd(\C)$ and $\cn(\C)$}
\label{densityComplex}
In this section, we take $F = \C$ and consider the real volume on
$P_n(\C)$ as a $(2n-2)$-dimensional real vector space. We discuss
briefly the density estimates we obtain for $\cnd(\C)$. The approach
is similar to that of Section \ref{section: density estimates in R};
therefore, we merely sketch the proofs and summarize the results we
obtain. We drop the field $F = \C$ from our notation.

As in  \eqref{eq:cnd}, we take an $\epsilon$-tube around $\cnd$,
namely given $f  = \sum_{1\leq i \leq n} f_i x^i \in \cnd$ and
$\epsilon > 0$, we define the (complex) $\epsilon$-neighborhood of
$f$ as
\begin{align*}
U_\epsilon (f) = \Big\{ u =  & \sum_{1\leq i \leq n} u_i x^i \in P_n
\colon
 u_i = f_i \text{ for } i \in \nt_{d}, \\
& |u_i - f_i| < \epsilon \text{ for } i \in \cnt_{d} \Big\}.
\end{align*}
Thus $U_\epsilon (f)$ is an open $m_d$-dimensional complex
polycylinder in $P_n$, of real dimension $2m_{d}$. Around each
coefficient $f_i$ with $i \in \cnt_{d}$, we have a real circle of
radius $\epsilon$ and area $\pi
\epsilon^2$. 
For $B > 0$, we set
\begin{align*}
U_\epsilon (\cnd) & = \bigcup_{f \in \cnd} U_\epsilon (f),\\
P_{n,B} & = \Big \{ f = \sum_{1\leq i \leq n} f_i x^i  \in P_{n}
\colon |f_i| < B
\text{ for } 1 \leq i < n \Big\}, \\
U_{\epsilon,B} (\cnd)  & = U_\epsilon(\cnd )\cap P_{n,B}.
\end{align*}

Then $~vol (P_{n,B}) = (\pi B^2)^{n-1}$.  Let $\chi \colon P_{n,B}
\rightarrow \{ 0,1\}$ be the characteristic function of
$U_{\epsilon,B} (\cnd)$.  As before, we express the polycylinder
$P_{n,B}$ as the Cartesian product $P_{n,B}=P_{n,B}^{\nt_d}\times
P_{n,B}^{\cnt_d}$ and apply Fubini's theorem to obtain
\begin{align}
  ~vol \left(U_{\epsilon,B}(\cnd)\right)=\int_{P_{n,
      B}^{\nt_d}}\Big(\int_{P_{n,B}^{\cnt_d}}
    \chi(a^{\nt_d},a^{\cnt_d})\, \mathrm{d}a^{\cnt_d}
  \Big)\, \mathrm{d}a^{\nt_d}. \label{eq: volume via Fubbini th
    complex}
\end{align}

For an arbitrary element $b \in P_{n,B}^{\nt_d}$, there exists a
unique $f\in \cnd $ with $\pi^{\nt_{d}}(f) = b$ by Theorem
\ref{geometry}. Then the function $\chi( b, f^{\cnt_d})$ takes the
value $1$ on an $m_{d}$-dimensional complex polycylinder of radius
$\epsilon$ whose center is the vector of coefficients of $f$
corresponding to indices in $\cnt_d$. As a consequence, we have
\begin{align*}
~vol \left(U_{\epsilon, B}(\cnd)\right)& =
\int_{P_{n,B}^{\nt_d}}(\pi \epsilon^2)^{m_d}\mathrm{d}a^{\nt_d}
\\&\leq (\pi \epsilon^2)^{m_d} (\pi B^2)^{n-1-m_d}.
\end{align*}
This yields the complex analog of the upper bound of Proposition
\ref{prop: upper bound density}:
$$
~den_{\epsilon, B}(\cnd)= \frac{~vol (U_{\epsilon,B}(\cnd))}{(\pi
  B^{2})^{n-1}} \leq \big(\frac{\epsilon}{B}\big)^{2m_{d}}.
$$
 On the other hand, for a lower
bound we consider as in Proposition \ref{prop: lower bound density}
the characteristic function $\chi_\epsilon \colon P_{n,B}\to\{0,1\}$
of the set
$$
V = \bigcup_{f \in \cnd \cap P_{n,A_{B-\epsilon}}}U_{\epsilon}(f)
\subseteq U_{\epsilon, B}(\cnd)
$$
for $\epsilon<B$, where
$A_{B-\epsilon}=\alpha_{n,d}(B-\epsilon)^{\frac{1}{n}}$ and
$\alpha_{n,d}$ is defined as in \eqref{alpha}, and argue as before
to obtain
$$
~vol (U_{\epsilon, B} (\cnd) ) \geq ~vol (V) \geq
(\pi\epsilon^{2})^{m_d} (\pi
(\alpha_{n,d}(B-\epsilon)^{\frac{1}{n}})^{2})^{d+\frac{n}{d}-2},
$$
provided that $B-\epsilon\ge (2/\alpha_{n,d})^n$.

Finally, in order to obtain a meaningful notion of density of $\cn$,
we consider, as in Section \ref{section: density estimates in R},
the $\epsilon$--tube $U_{\epsilon, B}\left(\cn \right)=U_{\epsilon,
B}\left(C_{n,l}\right) \cup U_{\epsilon, B} \left(C_{n,n/l} \right)$
around $\cn$, where $l$ is the smallest prime divisor of $n$.
Summarizing, we have the following results on the density of these
tubes.
 \begin{theorem}\label{th:composite}
 \label{complexBound}
Let $0 <  \epsilon <B$, $n$ be a composite integer, and $\delta_{n}$
as in \eqref{al:delta}.
 \begin{enumerate}
 \item Let $d$ be a proper divisor of $n$ and $B-\epsilon\ge (2/\alpha_{n,d})^n$. Then
 \begin{align*}
   c_d'(\epsilon,B)\left( \frac \epsilon B \right) ^{2(n-d- \frac{n}{d} +1)}& \leq
   ~den_{\epsilon,B}(\cnd(\C))
     \leq \left( \frac \epsilon B \right) ^{2(n - d - \frac{n}{d} +1)},
\end{align*}
where $c_d'(\epsilon,B)=\big((B-\epsilon)^{\frac{1}{n}}
B^{-1}(\frac{d}{n})^{1+\frac{d(d+1)}{2n}}\big)^{2(d+\frac{n}{d}-2)}$.\smallskip
 \item
 Let $l$ be the smallest prime divisor of $n$ and $B-\epsilon\ge (2/\alpha_{n,l})^n$. Then
\begin{align*}
\delta_{n} c_l'(\epsilon,B) \left( \frac{\epsilon}{B}\right)
^{2(n-l-\frac{n}{l}+1)} \le ~den_{\epsilon,B}(\cn(\C))
  \leq \delta_{n} \left( \frac \epsilon B \right) ^{2(n-l-\frac{n}{l}+1)}.
\end{align*}
%
 \end{enumerate}
 \end{theorem}

\section{Discussion}
\label{discus}

For an arbitrary irreducible algebraic subvariety $X$ of $\R^n$ with
codimension $m$, we might attach a hypercube
$(-\epsilon,\epsilon)^m$ to each smooth point $x$ of $X$ in the
normal direction to $X$, thus following the second recipe listed in
the introduction.  The singular points do not contribute to the
volume.  Then this tube around $X$ has real dimension $n$.  In an
analog of Lemma \ref{lemma: upper bound iterated
  integral}, the coordinates in $\nt_d$ are replaced by local
coordinates at the point and those of $\cnt_d$ by coordinate
functions normal to them.  Instead of having a unique $f$ as in the
proof of that lemma, we only know that the normal linear space, of
complementary dimension, intersects generically in at most $\deg X$
points. The resulting upper bound then is $ \deg X \cdot (2
\epsilon)^m$.

Our construction in \eqref{eq:cnd} of the $\epsilon$-tube around
$\cnd$ does not follow this general recipe, since the
$\epsilon$-hypercube in the direction of the coordinates from
$\cnt_{d}$ is, in general, not normal to $\cnd$. It is not clear
whether one can obtain
 upper and lower bounds as in Theorems \ref{th:least} and
\ref{th:composite} for other choices of the $\epsilon$-tubes.

Cheung \emph{et al.} \cite{cheng13} also provide bounds on the
density of $\cn(\C)$.
 Instead of the precise information provided by the Newton-Taylor
 method of Section \ref{sec:nt}, they use the fact that $\cn(\C) \subseteq X$
 for a certain hypersurface $X$ and then a specific one-dimensional
 $\epsilon$-tube around $X$ chosen to suit their argument,
 following the third of the options listed in the introduction. They show
 that $~den_{\epsilon,B}(\cn(\C))\leq (n^2-2n) \cdot (\epsilon/B)^2$,
 which is to be compared with our result in Theorem \ref{complexBound}.

\section{Acknowledgements}
Many thanks go to Igor Shparlinski for alerting us to the paper of
Cheung \emph{et al.} \cite{cheng13}.


\providecommand{\bysame}{\leavevmode\hbox
to3em{\hrulefill}\thinspace}
\providecommand{\MR}{\relax\ifhmode\unskip\space\fi MR }
\providecommand{\MRhref}[2]{%
  \href{http://www.ams.org/mathscinet-getitem?mr=#1}{#2}
} \providecommand{\href}[2]{#2}

\end{document}